\documentclass[15pt]{article}

\textwidth15truecm \textheight21truecm \oddsidemargin=0cm

\usepackage[colorlinks,linkcolor=red,citecolor=blue]{hyperref}

\usepackage{amsmath,amsfonts,amssymb,amsthm,amscd}

\newcounter{stepctr}
{\end{list}}

\newtheorem{thm}{Theorem}[section]
\newtheorem{prop}[thm]{Proposition}
\newtheorem{cor}[thm]{Corollary}

\theoremstyle{definition}
\newtheorem{dfn}[thm]{Definition}
\newtheorem{ex}[thm]{Example}

\newtheorem{rema}[thm]{Remark}
\newtheorem{lem}[thm]{Lemma}
\newtheorem{prob*}{Open problem}
\newcommand{\demo}{\begin{proof}}

\newcommand{\R}{\ensuremath{\mathcal{R}}}

\newcommand{\N}{\mathbb{N}}

\newcommand{\C}{\mathbb{C}}

\def\ll^2{{\mathcal L}(\ell^2(\N))}

\def\f^0x{{\mathcal F^0}(X) }

\usepackage{color}
\usepackage{tikz}

\usetikzlibrary{shapes, arrows, calc, arrows.meta, fit, positioning} 
\tikzset{
    -Latex,auto,node distance =1.5 cm and 1.3 cm, thick,
    state/.style ={ellipse, draw, minimum width = 0.9 cm}, 
    point/.style = {circle, draw, inner sep=0.18cm, fill, node contents={}},
    bidirected/.style={Latex-Latex,dashed}, 
    el/.style = {inner sep=2.5pt, align=right, sloped}
}

\pagestyle{myheadings} \markboth{\rm    \hss} {\hss\rm  On subclasses   of   Browder and Weyl operators}
\title
{\bf  On subclasses   of   Browder and Weyl operators}

\author{   Z. Aznay, A. Ouahab, H. Zariouh }

\date{}
\begin{document}

\maketitle \thispagestyle{empty}

\begin{abstract}\noindent\baselineskip=10pt
 The main purpose of this paper, is to  introduce and study the classes $(ab_{e})$ and $(aw_{e})$ which are
strongly related to what has been recently studied in \cite{aznay-zariouh}. Furthermore, we give the connection between these classes and those that have been studied in \cite{berkani-zariouh0}.   We also give an affirmative answer to a question asked in \cite{aznay-zariouh}.
\end{abstract}

 \baselineskip=15pt
 \footnotetext{\small \noindent  2010 AMS subject
classification: Primary 47A53, 47A10, 47A11 \\
\noindent Keywords: $(aw_{e})$-operators, $(ab_{e})$-operators} \baselineskip=15pt

\section{Introduction}

In 1909, H.Weyl \cite{W} examined the spectra of all compact perturbation of
 a self-adjoint operator on a Hilbert space, and found that their intersection consisted precisely of
  those points of the spectrum which were not isolated eigenvalues of finite multiplicity. Today this
   classical result may be stated by saying that the spectral points of a self-adjoint operator which
   do not belong to Weyl spectrum are precisely the eigenvalues of finite multiplicity which are isolated points
    of the spectrum. This Weyl's theorem has been extended from self-adjoint operators to  normal operators and  to several other Banach spaces classes of
    operators, and many new variants such as    Rako\v{c}evi\'{c}'s, Browder's
 have been obtained by many researchers, see for example \cite{harte-lee, berkani-zariouh1,  berkani-zariouh0, gupta-kashyap, rako1, rako2}.
Moreover, in \cite{Berkani-koliha} Berkani extended   Weyl's  theorem (and some their variants) in the context of B-Fredholm theory to a new variant called generalized Weyl's theorem and he proved in particular that  the class of normal operators $(\mathcal{N})$  is a   subclass of the class $(gW)$  of operators satisfying  generalized Weyl's theorem, which in turn is a   subclass of the class $(W)$ of operators satisfying Weyl's theorem; so that $(\mathcal{N})\subset (gW)\subset (W).$ On the other hand, a result due to Conway \cite[Chapter XI, Proposition 4.6]{conway} shows that  the essential spectrum  of a normal operator    consists precisely of all points in its spectrum except the isolated eigenvalues of finite multiplicity. As a motivation for this result, we went very recently (see \cite{aznay-zariouh}) in the same direction by determining the class named  $(W_{e})$ of  operators which satisfy this result  and we proved that $(\mathcal{N})\subset(gW_{e})=(gW)\cap(W_{e})\subset(W);$ where $(gW_{e})$ is the generalization class of $(W_{e})$ in  the context of B-Fredholm theory. As a continuation of \cite{aznay-zariouh}, we extend these result by studying others new classes of operators named $(gaw_{e})$ and  $(aw_{e})$ and we prove that $(\mathcal{N})\subset (gaw_{e})=(gaw)\cap (gW_{e})\subset (aw_{e})=(aw)\cap (W_{e});$ where $(gaw)$ and  $(aw)$ are the classes studied in \cite{berkani-zariouh0}. Furthermore, we give an affirmative answer to a question asked  in \cite{aznay-zariouh}.

\section{Terminology and preliminaries}

Let $X$ denote an infinite dimensional complex Banach space, and
denote by $L(X)$ the algebra of all bounded linear operators on $X.$
For $T\in L(X),$ we  denote by $T^*,$ $\alpha(T)$  and   $\beta(T)$ the dual of $T,$ the dimension of the
kernel $\mathcal{N}(T)$  and  the codimension of the range $\R(T),$ respectively.
By $\sigma (T)$ and  $\sigma_a(T),$  we denote the spectrum and
 the approximate spectrum of $T,$
respectively.\\
 Recall that $T$ is said to be upper semi-Fredholm, if $\R(T)$ is
closed and $\alpha(T) <\infty,$ while $T$ is called lower
semi-Fredholm, if   $\beta(T) < \infty.$ $T\in
L(X)$ is said to be semi-Fredholm if $T$ is either an upper
semi-Fredholm or a lower semi-Fredholm operator. $T$ is
Fredholm if $T$ is upper semi-Fredholm and lower
semi-Fredholm. If $T$ is semi-Fredholm then the index of $T$ is
defined by $\mbox{ind}(T)=\alpha(T) -\beta(T).$ For an operator $T \in
L(X),$ the ascent $p(T)$ and the descent $q(T)$
are defined by $p(T) = \inf\{n\in \mathbb{N}: \mathcal{N}(T^n) = \mathcal{N}(T^{n+1})\}$
 and $q(T)= \inf\{n\in \mathbb{N}: \R(T^n) = \R(T^{n+1})\},$
 respectively; the
infimum over the empty set is taken $\infty.$  An operator $T$ is said to be Weyl if it is
Fredholm of index zero and is said to be Browder if it is Fredholm of finite
ascent and descent. \\
If $T\in L(X)$ and $ n \in \mathbb{N},$
 we denote  by $T_{[n]}$ the restriction
of  $T$ on $\R(T^n).$ $T$ is said to be semi-B-Fredholm  if there
exists $n\in \mathbb{N}$ such that
$\R(T^n) $ is closed and $T_{[n]}:
\R(T^n)\rightarrow \R(T^n)$ is semi-Fredholm. A B-Fredholm (resp., B-Weyl) operator is similarly defined.

 We recall that a complex number $\lambda\in\sigma (T)$ is a
pole of the resolvent of $T,$ if $T-\lambda I$ has finite
ascent and finite descent, and $\lambda\in\sigma_a(T)$ is a
left pole of $T$ if $p=p(T-\lambda I) <\infty$ and
$\R((T-\lambda I)^{p+1})$ is closed.
In the following  list, we summarize  the notations and symbols
needed later.

\smallskip
\noindent $\mbox{iso}\,A$: isolated points in a given subset A of $ \mathbb{C},$
$\mbox{acc}\,A$: accumulation points of a given subset A of $ \mathbb{C},$
  $A^C$: the complementary of a subset $A\subset \mathbb{C}.$ For $\lambda \in \C,$ we denote by
 $B(\lambda, \epsilon)=\{\mu \in \C : |\lambda -\mu|<\epsilon\}$ the  open ball of radius $\epsilon$ centred at $\lambda,$
 $D(\lambda, \epsilon)=\{\mu \in \C : |\lambda -\mu|\leq \epsilon\}$ the  closed  ball of radius $\epsilon$ centred at $\lambda$ and
$C(\lambda, \epsilon)=\{\mu \in \C : |\lambda -\mu|= \epsilon\}$ the  circle of radius $\epsilon$ centred at $\lambda.$\\
\noindent $(B)$: the class of operators satisfying Browder's theorem [$T\in(B)$ if $\Delta(T)=\Pi^0(T)$],\\
\noindent $(gB)$: the class of operators satisfying generalized Browder's theorem [$T\in(gB)$ if $\Delta^g(T)=\Pi(T)$],\\
 \noindent $(W)$: the class of operators satisfying Weyl's theorem [$T\in(W)$ if $\Delta(T)=E^0(T)$],\\
\noindent $(gW)$: the class of operators satisfying generalized Weyl's theorem [$T\in(gW)$ if $\Delta^g(T)=E(T)$].

\vspace{10pt}
 \begin{tabular}{l|l}
 $\sigma_{b}(T)$:  Browder spectrum of $T$ &   $\Delta^g(T):=\sigma(T)\setminus\sigma_{bw}(T)$\\
 $\sigma_{ub}(T)$:  upper semi-Browder spectrum of $T$ &$\Delta_{e}(T):=\sigma(T)\setminus\sigma_{e}(T)$\\
 $\sigma_{w}(T)$: Weyl spectrum of $T$ & $\Delta_{e}^g(T):=\sigma(T)\setminus\sigma_{bf}(T)$\\
 $\sigma_{uw}(T)$: upper semi-Weyl spectrum of $T$ &  $\Pi^0(T)$: poles of $T$ of finite rank\\
 $\sigma_{e}(T)$: essential spectrum of $T$ & $\Pi(T)$: poles of $T$\\
 $ \sigma_{d}(T)$:  Drazin spectrum of $T$ &  $\Pi_a^0(T)$: left  poles of $T$ of finite rank \\
  $\sigma_{ld}(T)$: left Drazin spectrum of $T$ & $\Pi_a(T)$: left  poles of $T$\\
$\sigma_{bw}(T)$: B-Weyl spectrum of $T$ &  $E^0(T):=\mbox{iso}\,\sigma(T)\cap\sigma_{p}^0(T)$ \\
 $\sigma_{bf}(T)$: B-Fredholm  spectrum of $T$ &  $E(T):=\mbox{iso}\,\sigma(T)\cap\sigma_{p}(T)$\\
 $\sigma_{p}(T)$:  eigenvalues of $T$ & $E_a^0(T):=\mbox{iso}\,\sigma_{a}(T)\cap\sigma_{p}^0(T)$\\
    $\sigma_{p}^0(T)$: eigenvalues of $T$ of finite multiplicity & $E_a(T):=\mbox{iso}\,\sigma_{a}(T)\cap\sigma_{p}(T)$\\
 $\Delta(T):=\sigma(T)\setminus\sigma_{w}(T)$ &  \\
\end{tabular}\\

For more details on several classes and spectra originating from Fredholm  or B-Fredholm theory, we refer the reader to \cite{aiena1, Berkani-koliha}.



\section{  The  class  $(ab_{e})$-operators}

 In \cite[Definition 2.1]{aznay-zariouh}, we  have introduced and studied the class $(B_{e})$-operators,  as a  subclass of the class $(B)$ of operators satisfying Browder's theorem.  As a continuation of \cite{aznay-zariouh, berkani-zariouh0},   we  introduce in the next definition a new class of operators named $(ab_{e})$-operators,  as a  subclass of the classes $(B_{e})$ and $(ab).$ Recall \cite{aznay-zariouh, berkani-zariouh0} that  $T\in (ab)$ if $\Delta(T)=\Pi_{a}^0(T)$ and that  $T\in (B_{e})$ if  $\Delta_{e}(T)=\Pi^0(T)$ [or equivalently $\sigma_{e}(T)=\sigma_{b}(T)].$

\begin{dfn}\label{dfn0} A bounded linear operator $T\in L(X)$ is said to belong to the class $(ab_{e})$ [$T\in (ab_{e})$ for brevity] if  $\Delta_{e}(T)=\Pi_a^0(T),$  and is said to belong to the class
 $(gab_{e})$ [$T\in (gab_{e})$ for brevity] if $\Delta_{e}^g(T)=\Pi_a(T).$
\end{dfn}

\begin{ex}\label{ex0}

\noindent (1) Let    $T$ be the operator  defined on $l^2(\mathbb{N})$ (which is usually denoted by $\l^2$) by $T(x_1,x_2,x_3,\ldots)=(x_1,\frac{x_2}{2}, \frac{x_3}{3},\ldots).$ It is  easily seen that  $\sigma(T)=\sigma_a(T)= \{0\}\cup \{\frac{1}{n}\}_{n\geq1}$  and  $\sigma_{bf}(T)=\sigma_{e}(T)=\sigma_{ub}(T)=\sigma_{ld}(T)=\{0\}.$ So  $T\in (ab_{e})\cap (gab_{e}).$

\noindent (2)  Let $Q$ be the operator  defined on $l^1(\mathbb{N})$ (which is usually denoted by $\l^1$) by
$Q(x_1,x_2,...)=(0,\alpha_{1}x_1, \alpha_{2}x_2,\ldots, \alpha{_k}x_k, \ldots),$ where $(\alpha_{i})$ is a sequence of complex numbers such that $0<|\alpha_{i}|< 1$ and $\sum_{i=1}^{\infty} |\alpha_{i}|<\infty,$
and we define the operator  $T$ on $l^1\oplus l^1$ by $T=Q\oplus 0.$ We have $\sigma(T)=\sigma_{a}(T)=\{0\}.$  It is easily seen that the range   $\mathcal{R}(T^n)$ of $T^n$ is not closed for any $n\in\mathbb{N},$ so that   $\sigma_{bf}(T)=\sigma_{e}(T)=\{0\}$ and $\Pi_{a}^0(T)=\Pi_{a}(T)=\emptyset.$ Thus  $T\in (ab_{e})\cap (gab_{e}).$
\end{ex}


Our next  proposition  prove that   the class $(ab_{e})$ contains the class $(gab_{e}).$  Moreover, the  example given below  shows that this inclusion in general  is proper.

\begin{prop}\label{prop0} Let $T\in L(X).$   If  $T \in (gab_{e}),$ then $T \in (ab_{e}).$
\end{prop}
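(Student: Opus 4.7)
The strategy is to establish the two inclusions $\Delta_e(T)\subseteq\Pi_a^0(T)$ and $\Pi_a^0(T)\subseteq\Delta_e(T)$ separately, using only the hypothesis $\Delta_e^g(T)=\Pi_a(T)$ together with the elementary containments $\Delta_e(T)\subseteq\Delta_e^g(T)$ (which holds because every Fredholm operator is B-Fredholm, giving $\sigma_{bf}(T)\subseteq\sigma_e(T)$) and $\Pi_a^0(T)\subseteq\Pi_a(T)$.

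For the forward inclusion, I would take $\lambda\in\Delta_e(T)$, so $\lambda\in\sigma(T)$ and $T-\lambda I$ is Fredholm; in particular $\alpha(T-\lambda I)<\infty$. The containment $\Delta_e(T)\subseteq\Delta_e^g(T)$ combined with the hypothesis places $\lambda$ in $\Pi_a(T)$, exhibiting $\lambda$ as a left pole, and the finite-rank condition then upgrades this to $\lambda\in\Pi_a^0(T)$.

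The main content is the reverse inclusion $\Pi_a^0(T)\subseteq\Delta_e(T)$. Starting from $\lambda\in\Pi_a^0(T)$, the hypothesis gives $\lambda\in\Pi_a(T)=\Delta_e^g(T)$, so $\lambda\in\sigma(T)$ and $T-\lambda I$ is B-Fredholm, while the ``finite rank'' part of the definition of $\Pi_a^0$ supplies $\alpha(T-\lambda I)<\infty$. The main obstacle is then to promote ``B-Fredholm with $\alpha<\infty$'' to ``Fredholm.'' The cleanest route is via Berkani's structure theorem for B-Fredholm operators, which yields a topological decomposition $X=M\oplus N$ into closed $(T-\lambda I)$-invariant subspaces with $T_1:=(T-\lambda I)|_M$ Fredholm and $T_2:=(T-\lambda I)|_N$ nilpotent, say $T_2^k=0$. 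Then $\alpha(T_2)\le\alpha(T-\lambda I)<\infty$, and the standard estimate $\dim\mathcal{N}(T_2^j)\le j\,\alpha(T_2)$ forces $\dim N=\dim\mathcal{N}(T_2^k)<\infty$. Hence $T_2$ is automatically Fredholm (it acts on a finite-dimensional space), and consequently $T-\lambda I=T_1\oplus T_2$ is Fredholm, giving $\lambda\in\Delta_e(T)$ and completing the equality.
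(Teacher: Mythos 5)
Your proof is correct, and its overall architecture (two inclusions, driven by $\Delta_e(T)\subset\Delta_e^g(T)$ and $\Pi_a^0(T)\subset\Pi_a(T)$) matches the paper's. The difference lies in how you upgrade ``B-Fredholm'' to ``Fredholm'' in the reverse inclusion. The paper observes that $\lambda\in\Pi_a^0(T)$ forces $\lambda\notin\sigma_{ub}(T)$, so $T-\lambda I$ is upper semi-Fredholm, and then invokes the known fact that an operator which is simultaneously semi-Fredholm and B-Fredholm is Fredholm. You instead use only the weaker consequence $\alpha(T-\lambda I)<\infty$ and unfold Berkani's decomposition $T-\lambda I=T_1\oplus T_2$ with $T_1$ Fredholm and $T_2$ nilpotent, showing $\dim N\le k\,\alpha(T_2)<\infty$ via the standard estimate on kernels of powers. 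Your route is more self-contained (it reproves the needed implication from the structure theorem rather than citing it) and nominally needs less than the paper does, since you never use that $\mathcal{R}(T-\lambda I)$ is closed; the paper's version is shorter because it leans on the semi-Fredholm/B-Fredholm lemma as a black box. Both are sound; the two justifications ultimately rest on the same decomposition theorem.
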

\begin{proof}   Suppose that $T\in (gab_{e})$ that's $\Delta_{e}^g(T)=\Pi_a(T).$  As $\Delta_{e}(T)\subset\Delta_{e}^g(T),$ then $\Delta_{e}(T) \subset \Pi_a^0(T).$ Conversely,  let $\lambda \in \Pi_{a}^0(T).$ As  $T \in (gab_{e}),$ then $\lambda \in [\sigma_{ub}(T)\cup \sigma_{bf}(T)]^C,$ and
 so  $T - \lambda I$   is both a semi-Fredholm  and a B-Fredholm operator. Hence  $T - \lambda I$ is  a Fredholm operator and $\lambda \in \Delta_{e}(T).$ Thus  $T \in (ab_{e}).$
\end{proof}

\begin{ex}\label{ex1}
Let $S$ be the operator  defined on $l^2$ by $S(x_1,x_2,x_3,\ldots)=(x_1,0, x_2,0, x_3, \ldots).$ And hereafter, let  $R$ denote the unilateral  right  shift operator  defined on $l^2$ by $R(x_1,x_2,x_3,\ldots)=(0, x_1, x_2, x_3, \ldots).$ We have $\sigma(S)=\sigma_{e}(S)=\sigma_{bf}(S)=D(0,1)$ and $\sigma_{uf}(S)=\sigma_{a}(S)=C(0,1).$   We consider  the operator   $T=S\oplus R \oplus 0;$ where $0$ means the null operator acting on $l^2.$   It is easily seen that   $\sigma(T)=\sigma_{e}(T)=\sigma_{bf}(T)=D(0, 1),$ $\sigma_{ub}(T)=\sigma_{a}(T)=C(0, 1)\cup \{0\}$ and $\sigma_{ld}(T)=C(0, 1).$ So $\Delta_{e}(T)=\Delta_{e}^g(T)=\Pi_a^0(T)=\emptyset$ and  $\Pi_a(T)=\{0\}.$  This proves that  $T \in (ab_{e})$ and   $T \not\in (gab_{e}).$
\end{ex}

\begin{prop}\label{newprop0}
Every Riesz operator $T \in L(X)$ belongs to the class  $(gab_{e}).$
\end{prop}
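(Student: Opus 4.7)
The plan is to prove the two inclusions $\Pi_a(T) \subseteq \Delta_e^g(T)$ and $\Delta_e^g(T) \subseteq \Pi_a(T)$ by exploiting the strong spectral structure of a Riesz operator. Since every $\lambda \neq 0$ in $\sigma(T)$ is a pole of finite rank, $T - \lambda I$ is Fredholm with finite ascent and descent and with closed iterated ranges; hence $\lambda \in \Pi^0(T) \subseteq \Pi_a(T)$ and $\lambda \notin \sigma_{bf}(T)$, and moreover $\sigma(T) \setminus \{0\} \subseteq \sigma_a(T)$ because poles are eigenvalues. Thus $\Delta_e^g(T) \cap (\C \setminus \{0\}) = \Pi_a(T) \cap (\C \setminus \{0\}) = \sigma(T) \setminus \{0\}$, and the proof reduces to showing $0 \in \Delta_e^g(T) \Longleftrightarrow 0 \in \Pi_a(T)$.

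If $X$ is finite-dimensional the claim is immediate, so assume $X$ is infinite-dimensional. Then $\sigma_e(T) = \{0\}$ since the essential spectrum is non-empty, and the stability of the semi-Fredholm index under small perturbations, combined with the Fredholmness of $T - \lambda I$ for every $\lambda \neq 0$, forbids $T$ from being upper semi-Fredholm; consequently $0 \in \sigma_a(T)$. The equivalence therefore collapses to \emph{$T$ is B-Fredholm if and only if $T$ is left Drazin invertible}.

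For the direction $(\Rightarrow)$, Berkani's decomposition writes $X = X_1 \oplus X_2$ and $T = T_1 \oplus T_2$ with $T_1$ Fredholm and $T_2$ nilpotent; the Riesz property passes to $T_1$, and Riesz plus Fredholm is impossible on an infinite-dimensional space by the argument of the previous paragraph, so $X_1$ is finite-dimensional. It follows that $T$ has finite ascent and $\R(T^n) \subseteq X_1$ is closed for all large $n$, so $0 \notin \sigma_{ld}(T)$. For $(\Leftarrow)$, put $p = p(T)$ and $M = \R(T^{p+1})$; finite ascent yields $\NN(T) \cap M = \{0\}$, so $T_{[p+1]}$ is injective with closed range on $M$, i.e.\ upper semi-Fredholm. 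The key step is to verify that $T|_M$ is itself Riesz: invariance of $M$ under polynomials in $T$ gives invariance under every resolvent $(T - \mu I)^{-1}$, so $\sigma(T|_M) \subseteq \sigma(T)$, and for each nonzero $\lambda \in \sigma(T|_M)$ the finite-dimensional spectral subspace of $T$ at $\lambda$ sits inside $M$ because $T$ acts invertibly on it, supplying the pole-of-finite-rank structure for $T|_M$. Applying the incompatibility of Riesz and upper semi-Fredholm in infinite dimension now to $T|_M$ forces $M$ to be finite-dimensional, which makes $T_{[p+1]}$ automatically Fredholm and hence $T$ B-Fredholm. The main obstacle is exactly this last step: confirming that the restriction of a Riesz operator to the closed invariant subspace $\R(T^{p+1})$ retains the Riesz property.
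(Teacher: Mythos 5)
Your proof is correct, but it takes a genuinely different and far more self-contained route than the paper's. The authors dispose of the statement in three lines: for a Riesz operator $\sigma(T)=\sigma_{a}(T)$ and $\sigma_{e}(T)=\sigma_{ub}(T)=\{0\}$, and they then invoke \cite[Theorem 2.14]{aznay-zariouh} as a black box to get $\sigma_{bf}(T)=\sigma_{ld}(T)$, whence $\Delta_{e}^g(T)=\sigma(T)\setminus\sigma_{bf}(T)=\sigma_{a}(T)\setminus\sigma_{ld}(T)=\Pi_{a}(T)$. You instead reprove the relevant special case of that cited result from scratch: after matching the two sets away from $0$ via the Ruston characterization, you reduce everything to the equivalence ``$T$ is B-Fredholm $\Longleftrightarrow$ $T$ is left Drazin invertible'' and establish it directly, using Berkani's Fredholm-plus-nilpotent decomposition in one direction and, in the other, the restriction of $T$ to $M=\R(T^{p+1})$ together with the key lemma that a Riesz operator restricted to a closed invariant subspace remains Riesz --- which your spectral-projection argument does prove, since $\sigma(T|_{M})\subset\sigma(T)$ and the spectral idempotents of $T|_{M}$ at nonzero points are restrictions of the finite-rank ones of $T$. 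The one step you pass over too quickly is the closedness of $\R(T_{[p+1]})=\R(T^{p+2})$, needed for ``injective with closed range''; this does hold, but it requires Grabiner's results \cite{Grabiner} on operators with topological uniform descent (finite ascent $p$ gives uniform descent for $n\geq p$, after which closedness of $\R(T^{p+1})$ propagates to the higher powers), and that should be said or cited. In exchange for its length, your argument is elementary and complete in itself, and it exhibits in passing the structure $T=T_{1}\oplus T_{2}$ with $\dim X_{1}<\infty$ whenever a Riesz operator is B-Fredholm, whereas the paper's proof buys brevity at the price of an external reference.
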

\begin{proof}
As $T$ is a Riesz operator, then $\sigma(T)=\sigma_{a}(T).$ And since  $\sigma_{e}(T)=\sigma_{ub}(T)=\{0\},$ from \cite[Theorem 2.14]{aznay-zariouh} we have    $\sigma_{bf}(T)=\sigma_{ld}(T).$ Hence $T \in (gab_{e}).$
\end{proof}

\begin{rema}\label{newrema0}
The classes $(ab_{e})$ and $(gab_{e})$ are not stable under the duality. To see this,
 we consider the operator $A=P\oplus 2S;$ where $S$ is the operator defined  in  the Example \ref{ex1} and $P$ is the projection defined on $l^2$ onto $\mathbb{C}e_{1};$ where $e_1=(1, 0, 0, \ldots).$ Then $\sigma(A)=\sigma_{bf}(A)=D(0, 2),$  $\sigma_{ub}(A)=\{0\} \cup C(0, 2)$ and $\sigma_{a}(A)=\{0, 1\} \cup C(0, 2).$ Hence     $A\notin (ab_{e})$ and then $A\notin (gab_{e}).$ However $A^*\in (gab_{e}),$ since $\sigma_{a}(A^*)=D(0, 2).$

\end{rema}
For $T\in L(X),$ we have always   $\Delta(T)\subset \sigma_{a}(T)$ and $\Delta^g(T)\subset \sigma_{a}(T).$   However, we cannot guarantee these two inclusions, if we replace $\Delta(T)$ by $\Delta_{e}(T)$ and $\Delta^g(T)$ by $\Delta_{e}^g(T).$ As example, we have      $\Delta_{e}(R)=\Delta_{e}^g(R)=B(0, 1)\not\subset \sigma_{a}(R)=C(0,1).$ Nonetheless, we have the following lemma which will be useful in the sequel. For definitions and properties of operators with topological uniform descent, we refer the reader to the paper of Grabiner \cite{Grabiner}.

\begin{lem}\label{newlem0}  Let $T \in L(X).$  Then the following statements hold. \\
(i) $\Delta_{e}(T)\subset \sigma_{a}(T)$ $\Longleftrightarrow$ $\Delta_{e}^g(T)\subset \sigma_{a}(T).$\\
(ii) $\sigma_{e}(T)\subset \sigma_{a}(T)$ $\Longleftrightarrow$  $\sigma_{bf}(T)\subset \sigma_{a}(T).$\\
(iii) If $\Pi_{a}(T)\subset\Delta_{e}^g(T)$ then $\Pi_{a}^0(T)\subset\Delta_{e}(T).$\\
(iv) If $\Pi_{a}(T)\subset\Delta^g(T)$ then $\Pi_{a}^0(T)=\Pi^{0}(T).$
\end{lem}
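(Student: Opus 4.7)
The plan is to reduce every clause to the B-Fredholm decomposition: if $T-\lambda I$ is B-Fredholm, then there is a splitting $X=M\oplus N$ into closed $(T-\lambda I)$-invariant subspaces with $(T-\lambda I)|_{M}$ Fredholm and $(T-\lambda I)|_{N}$ nilpotent of some order $k$. The crucial, repeatedly used observation will be: if in addition $T-\lambda I$ is upper semi-Fredholm, then $(T-\lambda I)^{k}$ is upper semi-Fredholm, so $N\subset \mathcal{N}((T-\lambda I)^{k})$ is finite-dimensional and $T-\lambda I=(T-\lambda I)|_{M}\oplus (T-\lambda I)|_{N}$ is Fredholm. In particular, any bounded-below B-Fredholm operator is Fredholm, since injectivity forces $N=0$.

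With this tool in hand, (i) and (ii) collapse to one short argument. The inclusion $\sigma_{bf}(T)\subset \sigma_{e}(T)$ gives $\Delta_{e}(T)\subset \Delta_{e}^{g}(T)$, which settles the $(\Leftarrow)$ of (i) and the $(\Rightarrow)$ of (ii). For the remaining implications I would pick $\lambda\notin \sigma_{a}(T)$ together with $\lambda\in \Delta_{e}^{g}(T)$ in (i), respectively $\lambda\in \sigma_{e}(T)\setminus \sigma_{bf}(T)$ in (ii); the boxed principle produces $\lambda\notin \sigma_{e}(T)$, forcing $\lambda\in \Delta_{e}(T)$ and hence, via the hypothesis of (i), $\lambda\in \sigma_{a}(T)$ (a contradiction), or directly contradicting $\lambda\in \sigma_{e}(T)$ in (ii).

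For (iii), I would fix $\lambda\in \Pi_{a}^{0}(T)$; then $T-\lambda I$ is upper semi-Browder, hence upper semi-Fredholm, and the chain $\Pi_{a}^{0}(T)\subset \Pi_{a}(T)\subset \Delta_{e}^{g}(T)$ shows $T-\lambda I$ is also B-Fredholm. The finite-dimensional-$N$ observation then yields that $T-\lambda I$ is Fredholm, so $\lambda\in \Delta_{e}(T)$, as wanted.

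For (iv), the inclusion $\Pi^{0}(T)\subset \Pi_{a}^{0}(T)$ is automatic, so only the reverse needs attention. For $\lambda\in \Pi_{a}^{0}(T)$, the hypothesis $\Pi_{a}(T)\subset \Delta^{g}(T)$ makes $T-\lambda I$ B-Weyl; combined with upper semi-Fredholmness and preservation of the index in the decomposition, this yields $T-\lambda I$ Weyl. Finally, finite ascent together with Fredholm of index zero produces finite descent via the standard bookkeeping that $\beta((T-\lambda I)^{n})=\alpha((T-\lambda I)^{n})$ stabilises once $\alpha$ does, so $T-\lambda I$ is Browder and $\lambda\in \Pi^{0}(T)$. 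The main obstacle throughout is really the single technical point that the nilpotent summand $N$ is finite-dimensional; once that is handled, all four clauses are short bookkeeping.
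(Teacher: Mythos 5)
Your argument is correct, and for parts (ii)--(iv) it is essentially the paper's own route: everything is funnelled through the principle that an operator which is both upper semi-Fredholm and B-Fredholm must be Fredholm, which you justify correctly from Berkani's decomposition $X=M\oplus N$ together with the bound $\dim N\le\alpha\bigl((T-\lambda I)^{k}\bigr)<\infty.$ (For (iv) the paper merely writes that the proof ``goes similarly with (iii)''; your version usefully supplies the two steps that are actually extra there, namely that the B-Fredholm index agrees with the Fredholm index, so B-Weyl plus upper semi-Fredholm gives Weyl, and that a Fredholm operator of index zero with finite ascent has finite descent.) Where you genuinely diverge is part (i). To prove the nontrivial implication $\Delta_{e}(T)\subset\sigma_{a}(T)\Rightarrow\Delta_{e}^{g}(T)\subset\sigma_{a}(T)$, the paper invokes Grabiner's punctured-neighbourhood theorem: it surrounds $\lambda\in\Delta_{e}^{g}(T)$ by a punctured disc avoiding $\sigma_{e}(T)$ and splits into cases according to whether that disc meets the complement of $\sigma_{a}(T)$, concluding in one case that $\lambda$ is a pole. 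You instead argue pointwise at $\lambda$ itself: if $\lambda\in\Delta_{e}^{g}(T)$ lay outside $\sigma_{a}(T)$, then $T-\lambda I$ would be bounded below and B-Fredholm, hence Fredholm by your boxed principle, so $\lambda\in\Delta_{e}(T)\subset\sigma_{a}(T)$, a contradiction. This is shorter, avoids Grabiner's perturbation machinery entirely, and actually establishes the unconditional inclusion $\Delta_{e}^{g}(T)\setminus\sigma_{a}(T)\subset\Delta_{e}(T)$, from which (i) is immediate; the paper's localisation argument yields some side information (the pole conclusion in its first case) but buys nothing needed for the statement. Both routes rest on the same standard fact that a left pole of finite rank corresponds to an upper semi-Browder, hence upper semi-Fredholm, operator.
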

\begin{proof}
(i)``If" let $\lambda \in \Delta_{e}^g(T),$  then by \cite[Corollary 4.8]{Grabiner} there exists $\epsilon >0$ such that $B_{\lambda}:=B(\lambda, \epsilon)\setminus\{\lambda\}\subset(\sigma_{e}(T))^C.$ If   $B_{\lambda}\cap(\sigma_{a}(T))^C\neq \emptyset,$ then by hypotheses there exists $\mu \not\in \Delta_{e}(T)$ such that  $\mu \notin \sigma_{e}(T).$ So $\mu \notin \sigma(T).$  We deduce from  \cite[Corollary 4.8]{Grabiner},  that  $\lambda \in \Pi(T)$ and then  $\lambda \in \sigma_{a}(T).$
If   $B_{\lambda}\cap(\sigma_{a}(T))^C= \emptyset,$ then for all $\mu \in B_{\lambda},$ we have $p(T-\mu I)=p(T- \lambda I)=\infty.$ Hence $\lambda \in \sigma_{a}(T)$ and then   $\Delta_{e}^g(T)\subset \sigma_{a}(T).$   ``Only if" obvious, since $\Delta_{e}(T)\subset \Delta_{e}^g(T).$ \\
(ii) ``If" is obvious. ``Only if"  let $\lambda \notin \sigma_{a}(T),$ then $\lambda \notin \sigma_{bf}(T).$  So that $T - \lambda I$   is  a semi-Fredholm  and a B-Fredholm operator. So $\lambda \notin \sigma_{e}(T).$\\
(iii) Since $\Pi_{a}^0(T)\subset \Pi_{a}(T),$ then  $\Pi_{a}^0(T)\subset\Delta_{e}^g(T).$ Let $\lambda \in  \Pi_{a}^0(T),$ then $T - \lambda I$ is  a semi-Fredholm and a B-Fredholm operator.  Hence    $\lambda \in \Delta_{e}(T).$\\
(iv) Goes similarly with  (iii).
\end{proof}

Note that  since  from \cite{aznay-zariouh},  the class  $(B_{e})$ and its generalization class $ (gB_{e})$ in the context of B-Freholm theory coincide, then we only refer to the class $ (B_{e})$ when necessary. We also recall the following equivalence which will be useful in the sequel: $\sigma_{e}(T)=\sigma_{w}(T)\Longleftrightarrow \sigma_{bf}(T)=\sigma_{bw}(T).$
\begin{thm}\label{lem0} Let $T \in L(X).$  The following statements hold.\\
(i) If   $\Pi_{a}^0(T)\subset\Delta_{e}(T)\subset\sigma_{a}(T),$  then $\Pi^0(T)=\Pi_{a}^0(T).$\\
(ii) If   $\Pi_{a}(T)\subset\Delta_{e}^g(T)\subset\sigma_{a}(T),$  then $\Pi(T)=\Pi_{a}(T)$ and $\Pi^0(T)=\Pi_{a}^0(T).$ \\
(iii) $T \in (B_{e})$ $\Longleftrightarrow$ $\Delta_{e}(T)\subset\Pi(T)$ $\Longleftrightarrow$ $\Delta_{e}(T)\subset\Pi_{a}^0(T)$ $\Longleftrightarrow$ $\Delta_{e}(T)\subset\Pi_{a}(T)$ $\Longleftrightarrow$ $\Delta_{e}^g(T)\subset\Pi_{a}(T).$
\end{thm}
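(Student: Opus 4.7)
The underlying tool I would isolate and reuse in all three parts is the following perturbation fact: if $T - \lambda I$ is Fredholm (respectively B-Fredholm) with $p(T - \lambda I) < \infty$ and $\Delta_{e}(T) \subset \sigma_{a}(T)$ (respectively $\Delta_{e}^{g}(T) \subset \sigma_{a}(T)$) holds, then $\operatorname{ind}(T - \lambda I) = 0$ and $\lambda \in \Pi^{0}(T)$ (respectively $\lambda \in \Pi(T)$). Indeed, the classical Fredholm punctured-neighborhood theorem, or in the B-Fredholm case Grabiner's Corollary 4.8 already in use in Lemma~\ref{newlem0}, produces $\epsilon > 0$ such that $T - \mu I$ is Fredholm and injective for $0 < |\mu - \lambda| < \epsilon$, with $\operatorname{ind}(T - \mu I) = \operatorname{ind}(T - \lambda I) \leq 0$. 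If the index were strictly negative, each such $\mu$ would lie in $\sigma(T) \setminus \sigma_{e}(T) = \Delta_{e}(T)$ while being bounded below, contradicting $\Delta_{e}(T) \subset \sigma_{a}(T)$; hence $\operatorname{ind} = 0$, $\lambda$ is isolated in $\sigma(T)$, and $T - \lambda I$ is Browder in the Fredholm case, respectively a pole in the B-Fredholm case.

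Part (i) is then immediate: the containment $\Pi^{0}(T) \subset \Pi_{a}^{0}(T)$ is automatic, and for each $\lambda \in \Pi_{a}^{0}(T)$ the left-pole condition supplies $p(T - \lambda I) < \infty$ while the hypothesis gives $\lambda \in \Delta_{e}(T)$ together with $\Delta_{e}(T) \subset \sigma_{a}(T)$, so the perturbation tool lands $\lambda$ in $\Pi^{0}(T)$.

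For part (ii), I would first establish $\Pi(T) = \Pi_{a}(T)$ by applying the B-Fredholm version of the tool to each $\lambda \in \Pi_{a}(T) \subset \Delta_{e}^{g}(T)$. The companion statement $\Pi^{0}(T) = \Pi_{a}^{0}(T)$ is then immediate: every $\lambda \in \Pi_{a}^{0}(T)$ lies in $\Pi_{a}(T) = \Pi(T)$ and has $\alpha(T - \lambda I) < \infty$, so $T - \lambda I$ is Browder and $\lambda \in \Pi^{0}(T)$.

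For part (iii), I would prove the cyclic chain $1 \Rightarrow 2 \Rightarrow 3 \Rightarrow 4 \Rightarrow 5 \Rightarrow 1$, labelling the five conditions in the order they appear in the statement. The implications $1 \Rightarrow 2$ and $3 \Rightarrow 4$ are direct containments, and $2 \Rightarrow 3$ uses that a Fredholm pole is Browder and hence of finite rank, so a fortiori a left pole of finite rank. The two remaining implications both invoke the perturbation tool. For $5 \Rightarrow 1$, each $\lambda \in \Delta_{e}(T) \subset \Delta_{e}^{g}(T) \subset \Pi_{a}(T) \subset \sigma_{a}(T)$ is Fredholm with finite ascent, so the tool puts it in $\Pi^{0}(T)$, giving $\Delta_{e}(T) = \Pi^{0}(T)$. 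For $4 \Rightarrow 5$, the same argument applied under $\Delta_{e}(T) \subset \Pi_{a}(T) \subset \sigma_{a}(T)$ yields $T \in (B_{e})$, and the equivalence $(B_{e}) = (gB_{e})$ recalled just before the theorem then delivers $\Delta_{e}^{g}(T) = \Pi(T) \subset \Pi_{a}(T)$. The main obstacle throughout is the index case analysis in the perturbation tool: the inclusion $\Delta_{e}(T) \subset \sigma_{a}(T)$ is precisely what rules out $\operatorname{ind}(T - \lambda I) < 0$, and every non-trivial conclusion in the theorem reduces to this one step.
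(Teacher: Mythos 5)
Your proposal is correct and follows essentially the same route as the paper: a Grabiner/punctured-neighborhood argument in which the hypothesis $\Delta_{e}(T)\subset\sigma_{a}(T)$ (resp. $\Delta_{e}(T)\subset\Pi_{a}(T)\subset\sigma_{a}(T)$) is exactly what rules out a strictly negative index and forces $T-\mu I$ to be invertible on the punctured disc, whence $\lambda\in\Pi^{0}(T)$ (resp. $\Pi(T)$). The only organizational difference is in (iii), where the paper proves just $\Delta_{e}(T)\subset\Pi_{a}(T)\Rightarrow\Delta_{e}^{g}(T)\subset\Pi_{a}(T)$ directly at the semi-B-Fredholm level and declares the remaining implications clear, whereas you close the full cycle explicitly and route that step through the coincidence $(B_{e})=(gB_{e})$; both are valid, and your version spells out the index case analysis that the paper leaves implicit.
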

\begin{proof}
(i) Let $\lambda \in \Pi_{a}^0(T),$  then $T-\lambda I$ is an operator of topological uniform descent for   $n\geq d;$  where $d$ is the degree of stable iteration of $T-\lambda I.$ As $(T-\lambda I)^{d+1}$ is also a semi-Fredholm operator, then $\epsilon:=\gamma((T-\lambda I)_{|\R(T-\lambda I)^d})>0.$    For  $\mu \in  B_{\lambda}:=B(\lambda, \epsilon)\setminus\{\lambda\},$ we  set $S=T-\lambda I$ and $V=T-\mu I.$ Then $\|(V-S)_{|\R(T-\lambda I)^d}\|=\|(\lambda-\mu)I_{|\R(T-\lambda I)^d}\|=|\lambda-\mu|<\epsilon.$  Hence the operator $V-S$ is  sufficiently small and invertible.  As $\Pi_{a}^0(T) \subset\Delta_{e}(T)$ then by \cite[Corollary 4.8] {Grabiner},  $V$ is a Fredholm operator with $\alpha(V)=0.$  Hence $ B_{\lambda} \subset (\sigma_{e}(T)\cup\sigma_{a}(T))^C.$  Let  us to  show  that  $B_{\lambda}\subset(\sigma(T))^C.$  Suppose to the contrary, that's there exists $\mu \in B_{\lambda}$ such that $\mu \in \sigma(T).$ Then $\mu \in \Delta_{e}(T)\subset\sigma_{a}(T),$   and this is a contradiction.     Using again  \cite[Corollary 4.8] {Grabiner}, we deduce that  $\lambda \in \Pi^0(T).$  Hence $\Pi^0(T)=\Pi_{a}^0(T).$\\
\noindent (ii) Goes similarly with the first point. Remark that  Lemma \ref{newlem0} proves that the   condition $\Pi_{a}(T)\subset\Delta_{e}^g(T)\subset\sigma_{a}(T)$  implies  that $\Pi_{a}^0(T)\subset\Delta_{e}(T)\subset\sigma_{a}(T).$\\
\noindent (iii) Let us to show only that  $\Delta_{e}(T)\subset\Pi_{a}(T) \Longrightarrow \Delta_{e}^g(T)\subset\Pi_{a}(T).$ All other equivalences are clear and are left to the reader. From Lemma \ref{newlem0}, we have  $\Delta_{e}^g(T)\subset \sigma_{a}(T).$ Let $\lambda \in \Delta_{e}^g(T)$ be arbitrary.  From the punctured neighborhood theorem for semi-B-Fredholm operators,  there exists $\epsilon >0$ such that $B_{\lambda}:=B(\lambda, \epsilon)\setminus\{\lambda\}\subset(\sigma_{e}(T))^C.$ If there exists $\mu \in B_{\lambda}\cap \sigma_{a}(T),$  then $\mu \in \Delta_{e}(T)\subset\Pi_{a}(T)$ and from the Grabiner's theory, we deduce that $\mu\not\in\sigma_{a}(T),$   this is a contradiction. So $B_{\lambda}\subset (\sigma_{a}(T))^C.$ Hence $\lambda \notin \sigma_{ld}(T)$ and then $\lambda \in \Pi_{a}(T).$
\end{proof}

The condition     ``$\Delta_{e}(T)\subset\sigma_{a}(T)$"  (which is  equivalent by Lemma \ref{newlem0} to ``$\Delta_{e}^g(T)\subset\sigma_{a}(T)$") assumed in the previous theorem is crucial. Indeed,  we consider the  operator $U=T\oplus R;$ where $T$ is the operator defined in the first point of  Example \ref{ex0}.  Then $\sigma(U)=\sigma_{w}(U)=\sigma_{b}(U)=D(0, 1),$  $\sigma_{a}(U)=\{0\}\cup\{1/n\}_{n\geq 2} \cup C(0, 1)$ and  $\sigma_{e}(U)=\sigma_{bf}(U)=\sigma_{ub}(U)=\sigma_{ld}(U)=\{0\} \cup C(0, 1).$ So $\Pi_{a}^0(U)=\Pi_{a}(U)=\{1/n\}_{n\geq 2}\subset D(0, 1)\setminus \left(\{0\}\cup C(0, 1)\right)=\Delta_{e}(U)=\Delta_{e}^g(U).$ But $\Pi^0(U)=\Pi(U)=\emptyset\neq\Pi_{a}^0(U).$

 \vspace{5pt}
From Theorem \ref{lem0} we obtain immediately the following corollary, which gives a relationship between the classes  $(ab_{e}),$   $(gab_{e})$ and the class $(B_{e}).$
\begin{cor}\label{cor0} Let  $T\in L(X).$ The following statements  hold.\\
  (i) $T\in (ab_{e})$  if and only if  $T \in (B_{e})$ and $\Pi^0(T)=\Pi_a^0(T).$\\
  (ii) $T\in (gab_{e})$  if and only if  $T \in (B_{e})$ and $\Pi(T)=\Pi_a(T).$
\end{cor}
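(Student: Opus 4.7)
The plan is to read both equivalences straight off Theorem~\ref{lem0}: part (iii) converts the defining \emph{equalities} of $(ab_e)$ and $(gab_e)$ into the characterization of $(B_e)$, while parts (i)--(ii) supply the pole coincidences. Throughout I will use the trivial universal inclusions $\Pi^0(T)\subset\Pi_a^0(T)$, $\Pi(T)\subset\Pi_a(T)\subset\sigma_a(T)$, and the slightly less trivial one $\Pi(T)\subset\Delta_e^g(T)$: indeed a pole $\lambda$ makes $T-\lambda I$ Drazin invertible, hence B-Browder, hence B-Fredholm, so $\lambda\in\sigma(T)\setminus\sigma_{bf}(T)=\Delta_e^g(T)$.

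For (i), the forward direction starts from $\Delta_e(T)=\Pi_a^0(T)$. This yields $\Delta_e(T)\subset\Pi_a^0(T)$, so Theorem~\ref{lem0}(iii) gives $T\in(B_e)$, that is $\Delta_e(T)=\Pi^0(T)$; combining the two equalities gives $\Pi^0(T)=\Pi_a^0(T)$. The backward direction is a one-line chain: $\Delta_e(T)=\Pi^0(T)=\Pi_a^0(T)$, i.e.\ $T\in(ab_e)$. (One may alternatively obtain the pole equality directly from Theorem~\ref{lem0}(i), since $\Pi_a^0(T)=\Delta_e(T)\subset\sigma_a(T)$ is automatic from $\Pi_a^0(T)\subset\sigma_a(T)$.)

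For (ii), the forward direction is parallel. From $\Delta_e^g(T)=\Pi_a(T)$, the inclusion $\Delta_e^g(T)\subset\Pi_a(T)$ triggers Theorem~\ref{lem0}(iii), giving $T\in(B_e)$. The chain $\Pi_a(T)\subset\Delta_e^g(T)\subset\sigma_a(T)$ is then satisfied (the right inclusion reduces to $\Pi_a(T)\subset\sigma_a(T)$), so Theorem~\ref{lem0}(ii) produces $\Pi(T)=\Pi_a(T)$. For the backward direction, $T\in(B_e)$ gives $\Delta_e^g(T)\subset\Pi_a(T)$ by Theorem~\ref{lem0}(iii); for the opposite inclusion, the universal fact $\Pi(T)\subset\Delta_e^g(T)$ recorded above, together with the assumption $\Pi_a(T)=\Pi(T)$, yields $\Pi_a(T)\subset\Delta_e^g(T)$, closing the loop. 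The only real content beyond bookkeeping is the universal inclusion $\Pi(T)\subset\Delta_e^g(T)$, so I expect no serious obstacle; everything else is a direct rearrangement of the hypotheses through Theorem~\ref{lem0}.
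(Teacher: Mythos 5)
Your proposal is correct and matches the paper's route: the paper gives no separate argument, stating only that the corollary follows immediately from Theorem~\ref{lem0}, and your derivation is exactly the intended unpacking of parts (i)--(iii) of that theorem. The one ingredient you add explicitly, the universal inclusion $\Pi(T)\subset\Delta_e^g(T)$, is sound (and could equally be replaced by the paper's remark that $(B_e)=(gB_e)$, i.e.\ $T\in(B_e)$ already forces $\Delta_e^g(T)=\Pi(T)$).
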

  Corollary \ref{cor0}  above shows that the class $(ab_{e})$ is included in the class $(B_{e}).$   However, the   operator  given in Remark \ref{newrema0} above, shows that this inclusion in general is proper.

 \vspace{5pt}
Our next proposition gives a relationship between the class $(ab_{e})$ (resp., the class $(gab_{e})$) and the class   $(ab)$ (resp., the class  $(gab)$). Its proof is simple and is left to the reader.  Recall that \cite{berkani-zariouh0}, $T\in (gab)$ if $\Delta^g(T)=\Pi_{a}(T).$ And it is proved in \cite{berkani-zariouh0} that an operator $T\in (gab)$  implies that  $T\in (ab),$ but in  general not  conversely.

\begin{prop}\label{cor1}  Let $T\in L(X).$  The following statements  hold.\\
(i)  $T\in (ab_{e})$   if and only if   $T \in (ab)$     and $\sigma_{e}(T)=\sigma_{w}(T).$\\
(ii) $T\in (gab_{e})$  if and only if   $T \in (gab)$     and $\sigma_{e}(T)=\sigma_{w}(T).$
\end{prop}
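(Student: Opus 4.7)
The plan is to use the fact that both pairs, $(ab_e)/(ab)$ and $(gab_e)/(gab)$, differ only through the spectrum appearing on the left-hand side of the defining equality: $\Delta_e(T)$ versus $\Delta(T)$ in the first case, and $\Delta_e^g(T)$ versus $\Delta^g(T)$ in the second. Hence the claim reduces to showing that the hypothesis $T \in (ab_e)$ (resp.\ $(gab_e)$) is exactly strong enough to force $\Delta_e(T) = \Delta(T)$ (resp.\ $\Delta_e^g(T) = \Delta^g(T)$), which in turn amounts to $\sigma_e(T) = \sigma_w(T)$. The equivalence noted just before Theorem \ref{lem0}, namely $\sigma_e(T) = \sigma_w(T) \Longleftrightarrow \sigma_{bf}(T) = \sigma_{bw}(T)$, will be the bridge between the ``ordinary'' and the ``B-'' versions.

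For the direction $(\Leftarrow)$ in both items, the argument is immediate from the definitions: assuming $\sigma_e(T) = \sigma_w(T)$, one obtains $\Delta_e(T) = \Delta(T)$ directly and, via the equivalence above, also $\Delta_e^g(T) = \Delta^g(T)$. Substituting into the hypothesis $\Delta(T) = \Pi_a^0(T)$ (respectively $\Delta^g(T) = \Pi_a(T)$) yields $T \in (ab_e)$ (respectively $T \in (gab_e)$). No further machinery is needed here.

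For the harder direction $(\Rightarrow)$, I would invoke Corollary \ref{cor0}: assuming $T \in (ab_e)$ it tells us that $T \in (B_e)$, and similarly $T \in (gab_e)$ implies $T \in (B_e)$. By the definition of $(B_e)$ recalled at the start of the section — equivalently $\sigma_e(T) = \sigma_b(T)$ — and the standard chain $\sigma_e(T) \subset \sigma_w(T) \subset \sigma_b(T)$, we conclude $\sigma_e(T) = \sigma_w(T)$. Plugging this equality back into the hypothesis (as in the easy direction) converts $\Delta_e(T) = \Pi_a^0(T)$ into $\Delta(T) = \Pi_a^0(T)$, giving $T \in (ab)$; the parallel substitution with the $\sigma_{bf} = \sigma_{bw}$ reformulation handles $T \in (gab)$.

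The step I expect to need the most care is the appeal to Corollary \ref{cor0} in the $(\Rightarrow)$ direction, since without it one is tempted to argue from scratch that for $\lambda \in \Delta_e(T) \cap \sigma(T)$ the Fredholm plus upper semi-Browder properties of $T-\lambda I$ imply $T-\lambda I$ is Weyl — but as the example of the unilateral right shift shows, Fredholm combined with finite ascent alone does \emph{not} force index zero. Routing through Corollary \ref{cor0} avoids this pitfall, making the proof a clean diagram chase rather than a spectral computation. Apart from that, the argument is purely formal and fits comfortably within the hint in the statement that it is ``simple and left to the reader.''
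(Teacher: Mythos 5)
Your proof is correct: the paper itself leaves this proposition to the reader, and your argument supplies exactly the intended reasoning — the backward direction is immediate from $\sigma_e(T)=\sigma_w(T)\Rightarrow\Delta_e(T)=\Delta(T)$ (and, via the recalled equivalence $\sigma_e(T)=\sigma_w(T)\Leftrightarrow\sigma_{bf}(T)=\sigma_{bw}(T)$, also $\Delta_e^g(T)=\Delta^g(T)$), while the forward direction correctly routes through Corollary \ref{cor0} to get $T\in(B_e)$, hence $\sigma_e(T)=\sigma_b(T)$, which squeezes $\sigma_w(T)$ in the chain $\sigma_e(T)\subset\sigma_w(T)\subset\sigma_b(T)$. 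This is consistent with the paper's own later remark that Corollary \ref{cor0} and this proposition combine to give $(ab_e)=(ab)\cap(B_e)$ and $(gab_e)=(gab)\cap(B_e)$, so no further comment is needed.
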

From Corollary \ref{cor0} and Proposition \ref{cor1}, we conclude   that $(ab_{e})=(ab)\cap (B_{e})$ and  $(gab_{e})=(gab)\cap(B_{e}).$
Moreover, we show by the following example that the inclusions  $(ab_{e})\subset(ab)$ and $(gab_{e})\subset(gab)$   in general are proper. Here and elsewhere,
$L$ denotes  the unilateral left shift operator defined on  $l^2$ by $L(x_1, x_2, \dots)=(x_2, x_3, \dots).$  It is easily seen    that $\sigma(L)=\sigma_{w}(L)=\sigma_{a}(L)=\sigma_{ub}(L)=\sigma_{ld}(L)=D(0, 1)$ and  $\sigma_{e}(L)=C(0, 1).$
So $\Delta^g(L)=\Pi_a(L)=\emptyset \neq  \Delta_{e}(L).$ Thus  $L\in  (gab)$ and then  $L\in  (ab).$   But $L \not\in (ab_{e})$ and then $L \not\in (gab_{e}).$

\begin{rema}\label{rema0} It follows from  Corollary \ref{cor0} and Proposition \ref{cor1} that   if $T\in  (ab_{e}),$ then $\Delta(T)=\Delta_e(T)=\Pi^0(T)=\Pi_a^0(T)$ and $\Delta^g(T)=\Delta_e^g(T)=\Pi(T).$  And if in addition,  $T\in (gab_{e})$ then $\Delta(T)=\Delta_e(T)=\Pi^0(T)=\Pi_a^0(T)$ and $\Delta^g(T)=\Delta_e^g(T)=\Pi(T)=\Pi_a(T).$
\end{rema}
From Proposition \ref{prop0} and Remark \ref{rema0},  we obtain the following result which gives  a sufficient condition for an operator belonging to the  class  $(ab_{e}),$ to belong to the  class $(gab_{e}).$
\begin{cor}\label{cor2} Let  $T \in L(X).$ Then
 $T \in (gab_{e})$ if and only if
 $T \in (ab_{e})$ and $\Pi(T)=\Pi_a(T).$
\end{cor}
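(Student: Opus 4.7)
The plan is to derive both directions from the package of results just established, especially Proposition \ref{prop0} and Remark \ref{rema0}; essentially no new analytic work is needed, and the proof should be a short bookkeeping argument.

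For the forward implication, I would assume $T\in(gab_{e})$. Proposition \ref{prop0} immediately gives $T\in(ab_{e})$. To extract $\Pi(T)=\Pi_a(T)$, I would invoke Remark \ref{rema0}: its second sentence states that whenever $T\in(gab_{e})$ one has $\Delta^g(T)=\Delta_{e}^g(T)=\Pi(T)=\Pi_a(T)$, and in particular the equality $\Pi(T)=\Pi_a(T)$ that we need. (Alternatively, the same equality follows from Corollary \ref{cor0}(ii) together with the first sentence of Remark \ref{rema0}, which for $T\in(ab_{e})$ already gives $\Delta_{e}^g(T)=\Pi(T)$; combining this with $\Delta_{e}^g(T)=\Pi_a(T)$ yields the identification.)

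For the reverse implication, I would assume $T\in(ab_{e})$ and $\Pi(T)=\Pi_a(T)$. The first sentence of Remark \ref{rema0} tells us that the mere hypothesis $T\in(ab_{e})$ suffices to conclude $\Delta_{e}^g(T)=\Pi(T)$. Substituting $\Pi(T)=\Pi_a(T)$ gives $\Delta_{e}^g(T)=\Pi_a(T)$, which by Definition \ref{dfn0} is exactly the statement $T\in(gab_{e})$.

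There is no real obstacle here since everything reduces to assembling identities that Remark \ref{rema0} and Proposition \ref{prop0} have already proved; the only thing to be careful about is to reference Remark \ref{rema0} correctly in the reverse direction, because that is where the nontrivial equality $\Delta_{e}^g(T)=\Pi(T)$ for operators in $(ab_{e})$ is recorded (this is what upgrades the $(ab_{e})$-condition from a statement about $\Delta_{e}$ and $\Pi_a^0$ to one about $\Delta_{e}^g$ and $\Pi$). Once that is in hand, the corollary is a one-line substitution.
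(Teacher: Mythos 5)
Your proposal is correct and follows exactly the route the paper intends: the paper gives no separate proof but derives the corollary ``from Proposition \ref{prop0} and Remark \ref{rema0},'' which is precisely your argument (Proposition \ref{prop0} for the forward inclusion, the second sentence of Remark \ref{rema0} for $\Pi(T)=\Pi_a(T)$, and the first sentence of Remark \ref{rema0} plus substitution for the converse). Nothing is missing.
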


\par In \cite{aznay-zariouh}, we had mentioned  that the left shift operator   $L \in (aB),$ but  $L \notin (B_{e})$ and  we had asked   the following question. Does there exist an operator $T\in L(X)$ such that $T\in(B_{e}),$  but $T\not\in (aB)$?
 We   answer here  this question  affirmatively. Note that  $(aB)$ is the class of operators satisfying the classical a-Browder's theorem that is $T\in(aB)$ if $\sigma_{uw}(T)=\sigma_{ub}(T).$
\begin{ex}\label{ex4}~~
We consider the operator $T$ defined on    $l^2\oplus l^2$ by $T=L\oplus S,$ where  $S(x_1,x_2,x_3,\ldots)=(x_1,0, x_2,0, x_3, \ldots)$ is the operator defined in Example \ref{ex1}. We have   $\sigma(T)=\sigma_{e}(T)=\sigma_{ub}(T)=D(0, 1)$  and $\sigma_{ue}(T)=\sigma_{uw}(T)=C(0, 1).$ Thus  $T\in (B_{e}),$ but $T\not \in (aB).$ So the classes $(aB)$ and  $(B_{e})$ are  independent.
\end{ex}

\section{ The class  $(aw_{e})$-operators}
According to  \cite{berkani-zariouh0}, we say that $T\in (aw)$ if $\Delta(T)=E_{a}^0(T)$ and according to \cite{aznay-zariouh}, we say that $T\in (W_{e})$ if $\Delta_{e}(T)=E^{0}(T).$  As a continuation of \cite{aznay-zariouh, berkani-zariouh0} and the first part of this paper,  we introduce   a new class called   $(aw_{e})$-operators,   as a  subclass of the classes $(ab_{e})$ and $(aw).$ Furthermore, we    prove that $(aw_{e})=(aw)\cap(W_{e}).$

\begin{dfn}\label{dfn1}A bounded linear operator $T\in L(X)$ is said to belong to the class $(aw_{e})$ [$T\in (aw_{e})$ for brevity] if  $\Delta_{e}(T)=E_{a}^0(T),$ and is said to belong to the class
    $(gaw_{e})$ [$T\in (gaw_{e})$ for brevity] if $\Delta_{e}^g(T)=E_{a}(T).$
\end{dfn}

\begin{ex}\label{ex5}
\noindent (1)  Every normal  operator  $N$ acting on a Hilbert space belongs to  $(gaw_{e})\cap(aw_{e}).$ Indeed, from \cite[Corollary 3.10]{aznay-zariouh} we obtain $\Delta_{e}(N)=E^0(N)=E_{a}^0(N)=\{\mbox{iso}\,\sigma(N) : \alpha(N-\lambda I)<\infty\}$  and $\Delta_{e}^g(N)=E(N)=E_{a}(N)=\mbox{iso}\,\sigma(N).$\\
 \noindent (2) Let $T$ be the operator  defined on $l^2$   by  $T(x_1,x_2,x_3,\ldots)=(\frac{x_2}{2},\frac{x_3}{3}, \frac{x_4}{4},\ldots).$ Since  $E_{a}(T)=E_{a}^0(T)=\{0\}$ and $\sigma(T)=\sigma_{e}(T)=\sigma_{bf}(T)=\{0\},$ then $T\not\in (aw_{e})\cup(gaw_{e}).$\\
 \noindent (3) Every injective quasinilpotent operator $T$ acting on an infinite dimensional Banach space belongs to   $(gaw_{e})\cap(aw_{e}),$ since $\sigma_{bf}(T)=\sigma_{e}(T)=\{0\}=\sigma(T)$ and $E_{a}(T)=E_{a}^0(T)=\emptyset.$
 \end{ex}

The following proposition, gives  a relationship between the class    $(aw_{e})$ [resp., $(gaw_{e})$] and  the class $(ab_{e})$ [resp., $(gab_{e})$].

\begin{prop}\label{prop1} Let $T\in L(X).$ The following statements hold.\\
(i) $T\in (aw_{e})$ if and only if $T\in (ab_{e})$ and $\Pi_{a}^0(T)=E_{a}^0(T).$\\
(ii)  $T\in (gaw_{e})$ if and only if $T\in (gab_{e})$ and  $\Pi_{a}(T)=E_{a}(T).$
\end{prop}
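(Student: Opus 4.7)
The plan is to reduce each equivalence to a single substantive inclusion by exploiting the trivial facts $\Pi_{a}^0(T)\subseteq E_{a}^0(T)$ and $\Pi_{a}(T)\subseteq E_{a}(T)$, which hold for every $T\in L(X)$: each left pole is isolated in $\sigma_{a}(T)$ and must satisfy $\alpha(T-\lambda I)>0$, otherwise the closedness of $\R(T-\lambda I)$, furnished by topological uniform descent, would force $T-\lambda I$ to be bounded below, contradicting $\lambda\in\sigma_{a}(T)$.

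For (i) the ``if'' direction is immediate: if $T\in(ab_{e})$ and $\Pi_{a}^0(T)=E_{a}^0(T)$, then $\Delta_{e}(T)=\Pi_{a}^0(T)=E_{a}^0(T)$, i.e.~$T\in(aw_{e})$. For the ``only if'' direction, assume $T\in(aw_{e})$, i.e.~$\Delta_{e}(T)=E_{a}^0(T)$. The trivial inclusion yields $\Pi_{a}^0(T)\subseteq E_{a}^0(T)=\Delta_{e}(T)$, so it suffices to prove the reverse inclusion $\Delta_{e}(T)\subseteq \Pi_{a}^0(T)$; this single step simultaneously delivers $T\in(ab_{e})$ and $\Pi_{a}^0(T)=E_{a}^0(T)$. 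To prove it, take $\lambda\in\Delta_{e}(T)=E_{a}^0(T)$; then $T-\lambda I$ is Fredholm and $\lambda\in\mbox{iso}\,\sigma_{a}(T)$. Isolation in the approximate point spectrum yields SVEP of $T$ at $\lambda$; SVEP combined with upper semi-Fredholmness forces $p(T-\lambda I)<\infty$, while Fredholmness makes $\R((T-\lambda I)^{n})$ closed for every $n$. Hence $\lambda\in\Pi_{a}(T)$, and $\alpha(T-\lambda I)<\infty$ then gives $\lambda\in\Pi_{a}^0(T)$.

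Part (ii) follows verbatim after replacing $\Delta_{e}$, $\Pi_{a}^0$, $E_{a}^0$ by $\Delta_{e}^g$, $\Pi_{a}$, $E_{a}$, and upgrading the semi-Fredholm step to its semi-B-Fredholm analogue via Grabiner's punctured-neighborhood theorem \cite[Corollary 4.8]{Grabiner}: a point $\lambda\in\Delta_{e}^g(T)=E_{a}(T)$ is isolated in $\sigma_{a}(T)$, so SVEP together with semi-B-Fredholmness of $T-\lambda I$ again produces $\lambda\in\Pi_{a}(T)$. The main obstacle is precisely this ``SVEP plus (B-)semi-Fredholm implies left pole'' step; the subtle point is that isolation of $\lambda$ in $\sigma_{a}(T)$ does not in general propagate to isolation in $\sigma(T)$, so one can target only a left pole rather than a pole---which is exactly what the statement requires.
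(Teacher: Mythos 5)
Your proof is correct and follows essentially the same route as the paper: both reduce the nontrivial direction to the inclusion $\Delta_{e}(T)=E_{a}^0(T)\subset \mbox{iso}\,\sigma_{a}(T)\cap(\sigma_{e}(T))^C\subset\Pi_{a}^0(T)$ (and its B-Fredholm analogue for (ii)). The only difference is that the paper simply asserts the second inclusion is ``always true,'' whereas you supply the standard SVEP-plus-semi-Fredholm justification for it.
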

\begin{proof}
(i) Assume that   $T\in (aw_{e}).$ As $\Pi_{a}^0(T)\subset E_{a}^0(T),$ then $\Pi_{a}^0(T)\subset\Delta_{e}(T).$ Moreover, $\Delta_{e}(T)=\Delta_{e}(T)\cap E_{a}^0(T)\subset \mbox{iso}\,\sigma_{a}(T)\cap(\sigma_{e}(T))^C.$ Since  the inclusion $\mbox{iso}\,\sigma_{a}(T)\cap(\sigma_{e}(T))^C\subset \Pi_{a}^0(T)$ is always true, it follows that   $T\in (ab_{e})$ and $\Pi_{a}^0(T)=E_{a}^0(T).$ The converse is obvious.\\
(ii) Goes similarly with the first point.
\end{proof}

Remark that the operator $T$ given in the second point of  Example \ref{ex5}, shows that the class $(aw_{e})$  in general is a proper subclass of the class $(ab_{e}).$ It shows also that the class  $(gaw_{e})$ in general is a proper subclass of  the class $(gab_{e}).$

From Proposition \ref{prop1} and Corollary \ref{cor2}, we obtain the following corollary, which gives  a relationship between the classes $(aw_{e})$ and  $(gaw_{e}).$

\begin{cor}\label{cor3} Let $T\in L(X).$ Then
   $T\in (gaw_{e})$ if and only if  $T\in (aw_{e})$ and $E_{a}(T)=\Pi(T).$
\end{cor}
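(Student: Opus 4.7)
The plan is to assemble the corollary by chaining the two main structural results already established: Proposition \ref{prop1}, which recasts membership in $(aw_{e})$ and $(gaw_{e})$ as membership in $(ab_{e})$ (resp.\ $(gab_{e})$) together with an equality of left poles and isolated eigenvalues, and Corollary \ref{cor2}, which characterizes $(gab_{e})$ inside $(ab_{e})$ via $\Pi(T)=\Pi_{a}(T)$. The only extra ingredient I will need is the always-true chain of inclusions $\Pi(T)\subset\Pi_{a}(T)\subset E_{a}(T)$ (a pole is a left pole, and a left pole is an isolated point of $\sigma_{a}(T)$ that is an eigenvalue), together with the fact that restricting both sides of $\Pi_{a}(T)=E_{a}(T)$ to points of finite multiplicity gives $\Pi_{a}^{0}(T)=E_{a}^{0}(T)$.

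For the forward direction, assume $T\in(gaw_{e})$. By Proposition \ref{prop1}(ii) we have $T\in(gab_{e})$ and $\Pi_{a}(T)=E_{a}(T)$. By Corollary \ref{cor2} (applied to the pair $(ab_{e}),(gab_{e})$) this yields $T\in(ab_{e})$ together with $\Pi(T)=\Pi_{a}(T)$. Combining the two equalities gives $E_{a}(T)=\Pi(T)$. Intersecting $\Pi_{a}(T)=E_{a}(T)$ with the finite-multiplicity set $\sigma_{p}^{0}(T)$ yields $\Pi_{a}^{0}(T)=E_{a}^{0}(T)$, so by Proposition \ref{prop1}(i), $T\in(aw_{e})$.

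For the converse, assume $T\in(aw_{e})$ and $E_{a}(T)=\Pi(T)$. From Proposition \ref{prop1}(i), $T\in(ab_{e})$ and $\Pi_{a}^{0}(T)=E_{a}^{0}(T)$. The inclusions $\Pi(T)\subset\Pi_{a}(T)\subset E_{a}(T)$ are unconditional, and the hypothesis $E_{a}(T)=\Pi(T)$ collapses this chain to $\Pi(T)=\Pi_{a}(T)=E_{a}(T)$. The equality $\Pi(T)=\Pi_{a}(T)$ together with $T\in(ab_{e})$ gives $T\in(gab_{e})$ by Corollary \ref{cor2}, and then $\Pi_{a}(T)=E_{a}(T)$ together with $T\in(gab_{e})$ gives $T\in(gaw_{e})$ by Proposition \ref{prop1}(ii).

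The proof is essentially bookkeeping, so there is no single hard step; the one place where care is required is the passage from $\Pi_{a}(T)=E_{a}(T)$ to $\Pi_{a}^{0}(T)=E_{a}^{0}(T)$ in the forward direction, which must be justified by observing that both sets are obtained by intersecting their ``non-zero'' counterparts with the same finite-multiplicity condition.
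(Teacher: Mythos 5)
Your proof is correct and follows essentially the same route as the paper: both directions are obtained by chaining Proposition \ref{prop1} with Corollary \ref{cor2}, using the unconditional inclusions $\Pi(T)\subset\Pi_{a}(T)\subset E_{a}(T)$ to collapse the relevant equalities and then restricting to finite multiplicity to pass from $\Pi_{a}(T)=E_{a}(T)$ to $\Pi_{a}^{0}(T)=E_{a}^{0}(T)$. The only cosmetic difference is that in the converse the paper invokes Remark \ref{rema0} to get $\Delta_{e}^{g}(T)=\Pi(T)=E_{a}(T)$ directly, whereas you re-enter through Corollary \ref{cor2} and Proposition \ref{prop1}(ii); the two are interchangeable.
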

\begin{proof}
 Assume that $T\in (gaw_{e}).$ From Proposition \ref{prop1} and Corollary \ref{cor2},   we have $T\in (gab_{e})$ and   $E_{a}(T)=\Pi(T)=\Pi_a(T).$ So $E_{a}^0(T)= \Pi_{a}^0(T).$  Hence $T\in (aw_{e})$ and $E_{a}(T)=\Pi(T).$   Conversely,  suppose that  $T\in (aw_{e})$ and $E_{a}(T)=\Pi(T).$ So $E_{a}(T)=\Pi(T)=\Pi_{a}(T).$  From  Proposition \ref{prop1},   $T\in (ab_{e})$ and  by Remark \ref{rema0},  $\Delta_{e}^g(T)=\Pi(T)=E_{a}(T).$ Thus  $T \in (gaw_{e}).$
\end{proof}
 From Corollary \ref{cor3},   the class  $(gaw_{e})$ is included   in the class
 $(aw_{e}).$ Furthermore, this inclusion in general  is proper.  For this, we consider    the operator $B$ defined on $l^2\oplus l^2$ by $B=Q\oplus 0;$ where $Q$ is any injective quasi-nilpotent  operator defined on an infinite dimensional Banach space. We have   $B\in (aw_{e}),$ since $\sigma(B)=\sigma_{e}(B)=\{0\}$ and $E_{a}^0(B)=\emptyset.$ But  $B\not\in(gaw_{e}),$ since $\sigma_{bf}(B)=E_a(B)=\{0\}.$ Here $\Pi(B)=\emptyset.$

\begin{rema}\label{rema2} The classes $(aw_{e})$ and $(gaw_{e})$ are not stable under the duality. To see this,  let $S$ be  the operator defined on $l^2$ by  $S(x_1,x_2,x_3,\ldots)=(0, \frac{x_1}{2},\frac{x_2}{3}, \frac{x_3}{4},\ldots).$ Since $S$ is quasi-nilpotent and compact operator which is not of  finite rank,  then  $\sigma(S)=\sigma_{e}(S)=\sigma_{bf}(S)=\{0\}$  and $E_{a}(S)=E_{a}^0(S)=\emptyset.$  So $S\in (gaw_{e}).$
   But its dual $S^*=T \notin (aw_{e}),$ as seen in  Example \ref{ex5}.
\end{rema}

Recall  \cite{berkani-zariouh0}  that $T\in (gaw)$ if   $\Delta^g(T)=E_{a}(T).$ And it is proved in \cite{berkani-zariouh0} that the class $(gaw)$ is a  subclass of the class $(aw).$      Our  next proposition, proves  that the class $(aw_{e})$ [resp., $(gaw_{e})$]  is included in  the class $(aw)$ [resp., $(gaw)$]. Moreover,  the left shift operator $L$  shows that in general  these two   inclusions are proper.  We have  $\sigma(L)=\sigma_{a}(T)=\sigma_{bw}(L)=\sigma_{w}(L)=D(0, 1),$ $\sigma_{bf}(L)=\sigma_{e}(L)=C(0, 1)$ and $E_{a}(L)=E_{a}^0(L)=\emptyset.$ Thus $L\in (gaw)$ and then $L\in (aw).$ But $L\not\in (aw_{e})$ and then $L\not\in (gaw_{e}).$

\begin{prop}\label{prop2} Let  $T\in L(X).$ The following statements hold.\\
(i) $T\in (aw_{e})$ if and only if $T\in (aw)$ and $\sigma_{e}(T)=\sigma_{w}(T).$\\
(ii) $T\in (gaw_{e})$ if and only if $T\in (gaw)$ and  $\sigma_{e}(T)=\sigma_{w}(T).$
\end{prop}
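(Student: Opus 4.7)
The plan is to mirror the strategy already used for Proposition \ref{cor1}: both equivalences essentially reduce to the identity $\Delta(T)=\Delta_e(T)$ (respectively $\Delta^g(T)=\Delta_e^g(T)$), which is governed by the condition $\sigma_e(T)=\sigma_w(T)$.

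For part (i), in the forward direction I would first note that $T\in(aw_e)$ means $\Delta_e(T)=E_a^0(T)$. By Proposition \ref{prop1}(i) this forces $T\in(ab_e)$, and Proposition \ref{cor1}(i) then gives $\sigma_e(T)=\sigma_w(T)$, i.e.\ $\Delta_e(T)=\Delta(T)$. Substituting yields $\Delta(T)=E_a^0(T)$, so $T\in(aw)$. For the converse, the hypothesis $\sigma_e(T)=\sigma_w(T)$ directly gives $\Delta_e(T)=\Delta(T)$, and combining with $T\in(aw)$ (i.e.\ $\Delta(T)=E_a^0(T)$) yields $\Delta_e(T)=E_a^0(T)$, so $T\in(aw_e)$.

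For part (ii), the argument is completely analogous, but uses the generalized versions together with the equivalence recalled before Theorem \ref{lem0}:
\[
\sigma_e(T)=\sigma_w(T)\ \Longleftrightarrow\ \sigma_{bf}(T)=\sigma_{bw}(T).
\]
If $T\in(gaw_e)$, Proposition \ref{prop1}(ii) gives $T\in(gab_e)$, and Proposition \ref{cor1}(ii) then provides $\sigma_e(T)=\sigma_w(T)$; by the above equivalence this yields $\Delta_e^g(T)=\Delta^g(T)$, which combined with $\Delta_e^g(T)=E_a(T)$ gives $T\in(gaw)$. Conversely, $\sigma_e(T)=\sigma_w(T)$ forces $\Delta_e^g(T)=\Delta^g(T)$, and then $T\in(gaw)$ translates into $\Delta_e^g(T)=E_a(T)$, i.e.\ $T\in(gaw_e)$.

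There is no real obstacle here; the proof is essentially a bookkeeping exercise once one invokes Proposition \ref{prop1}, Proposition \ref{cor1}, and the recalled equivalence between $\sigma_e(T)=\sigma_w(T)$ and $\sigma_{bf}(T)=\sigma_{bw}(T)$. The only point deserving mild care is making sure that in the generalized case one uses the B-Fredholm version of the Weyl-type identity (rather than the classical Fredholm one), which is precisely what that recalled equivalence supplies. That is why the authors describe the proof as simple and leave it to the reader.
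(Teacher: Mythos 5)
Your argument is correct and follows exactly the paper's route: deduce $T\in(ab_e)$ from Proposition \ref{prop1}, obtain $\sigma_e(T)=\sigma_w(T)$ from Proposition \ref{cor1}, and then pass between $\Delta_e(T)$ and $\Delta(T)$ (resp.\ their generalized versions via the recalled equivalence $\sigma_e(T)=\sigma_w(T)\Leftrightarrow\sigma_{bf}(T)=\sigma_{bw}(T)$). You merely spell out the bookkeeping that the paper leaves implicit.
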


\begin{proof} (i) If $T \in (aw_{e})$ then  from Proposition \ref{prop1}, $T \in (ab_{e}).$ From Proposition \ref{cor1}, $\sigma_{e}(T)=\sigma_{w}(T)$ and so  $T\in (aw).$  The converse is obvious.\\
(ii) Goes similarly with (i).
\end{proof}

We give in the following proposition, the relationship between the class    $(aw_{e})$ [resp., $(gaw_{e})$] and  the class $(W_{e})$ [resp., $(gW_{e})$]. The class $(gW_{e})$ was  introduced and studied in \cite{aznay-zariouh} as follows: an operator   $T\in(gW_{e})$ if $\Delta_{e}^g(T)=E(T),$ and it is proved in \cite{aznay-zariouh} that $(gW_{e})$ is a  subclass of $(W_{e}).$
\begin{prop}\label{prop3} Let  $T\in L(X).$ The following statements hold.\\
(i) $T\in (aw_{e})$ if and only if $T\in (W_{e})$ and $E^0(T)=E_{a}^0(T).$\\
(ii) $T\in (gaw_{e})$ if and only if $T\in (gW_{e})$ and  $E(T)=E_{a}(T).$
\end{prop}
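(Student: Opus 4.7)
The plan is to deduce each equivalence by chaining together earlier results rather than unpacking definitions from scratch. The two key structural facts I will rely on are: (a) the inclusions $\Pi^0(T)\subset E^0(T)\subset E_a^0(T)$ and $\Pi(T)\subset E(T)\subset E_a(T)$, which hold for every $T\in L(X)$ because $\sigma_a(T)\subset\sigma(T)$ forces $\mathrm{iso}\,\sigma(T)\cap\sigma_a(T)\subset \mathrm{iso}\,\sigma_a(T)$; and (b) the fact proved in Remark~\ref{rema0} that membership in $(ab_{e})$ forces $\Delta_e(T)=\Pi^0(T)$ and membership in $(gab_{e})$ forces $\Delta_e^g(T)=\Pi(T)$.

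For (i), the forward direction will go as follows. Assume $T\in(aw_{e})$. By Proposition~\ref{prop1}(i), $T\in(ab_{e})$ and $\Pi_a^0(T)=E_a^0(T)$. By Remark~\ref{rema0} this gives $\Delta_e(T)=\Pi^0(T)=\Pi_a^0(T)$. Combining this with $\Pi^0(T)\subset E^0(T)\subset E_a^0(T)=\Pi_a^0(T)=\Pi^0(T)$ produces the chain
\[
\Pi^0(T)=E^0(T)=E_a^0(T)=\Delta_e(T),
\]
which simultaneously yields $T\in(W_e)$ and the equality $E^0(T)=E_a^0(T)$. The converse is immediate: if $\Delta_e(T)=E^0(T)$ and $E^0(T)=E_a^0(T)$, then $\Delta_e(T)=E_a^0(T)$, i.e.\ $T\in(aw_{e})$.

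Part (ii) will follow by the exact same pattern, using Proposition~\ref{prop1}(ii), Corollary~\ref{cor0}(ii) and Remark~\ref{rema0} to obtain $\Delta_e^g(T)=\Pi(T)=\Pi_a(T)=E_a(T)$, and then sandwiching $E(T)$ between $\Pi(T)$ and $E_a(T)$ to conclude $E(T)=E_a(T)=\Delta_e^g(T)$.

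There is no real obstacle here; the work was already done in Proposition~\ref{prop1}, Corollary~\ref{cor0} and Remark~\ref{rema0}. The only small point that needs to be stated cleanly is the inclusion $E^0(T)\subset E_a^0(T)$ (respectively $E(T)\subset E_a(T)$), which is what allows the sandwich argument to close. Everything else is bookkeeping, so I would write the proof of (i) in full and simply note that (ii) is proved analogously.
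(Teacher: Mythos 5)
Your proof is correct and follows essentially the same route as the paper's: reduce to $(ab_{e})$ via Proposition \ref{prop1}, invoke Remark \ref{rema0} to get $\Delta_e(T)=\Pi^0(T)=\Pi_a^0(T)=E_a^0(T)$, and sandwich $E^0(T)$ between $\Pi^0(T)$ and $E_a^0(T)$ to close the chain. The only difference is that you make the sandwich inclusions explicit where the paper leaves them implicit, which is a reasonable addition.
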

\begin{proof}
(i) If   $T\in (aw_{e}),$ then $T\in (ab_{e}).$ 
 From Remark \ref{rema0}  we conclude that  $\Delta_{e}(T)=\Pi^0(T)=\Pi_{a}^0(T)=E_{a}^0(T).$ So $\Delta_{e}(T)=\Pi^0(T)=\Pi_{a}^0(T)=E_{a}^0(T)=E^0(T).$   Thus $T\in (W_{e})$ and $E^0(T)=E_{a}^0(T).$ The converse is clear.\\
 (ii) Goes similarly with (i).
\end{proof}

 From Propositions \ref{prop2} and  \ref{prop3}, we deduce that $(aw_{e})=(aw)\cap(W_{e})$ and $(gaw_{e})=(gaw)\cap(gW_{e}).$   But   the operator $A$ given in Remark \ref{newrema0}, shows that the inclusions  $(aw_{e})\subset (W_{e}) $ and $(gaw_{e})\subset (gW_{e})$   in general are proper.
\par Now, we give a similar results to  Lemma \ref{newlem0} and to Theorem \ref{lem0}. The proof of Theorem \ref{newthm0}  is left to the reader.
\begin{lem}\label{newlema1}  Let $T \in L(X).$    \\
(i) If $E_{a}(T)\subset\Delta_{e}^g(T)$ then $E_{a}^0(T)\subset\Delta_{e}(T).$\\
(ii) If $E_{a}(T)\subset\Delta^g(T)$ then $E_{a}^0(T)\subset\Delta(T).$
\end{lem}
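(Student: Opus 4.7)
The plan is to reduce both statements to a single principle: a B-Fredholm operator whose kernel is finite-dimensional is in fact Fredholm, and a B-Weyl operator with the same property is in fact Weyl. Once this is available, both inclusions follow by unpacking the definitions.

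For (i), I would take $\lambda\in E_{a}^0(T)$. Since $E_{a}^0(T)\subset E_{a}(T)$, the hypothesis puts $\lambda$ in $\Delta_{e}^g(T)$, so $\lambda\in\sigma(T)$ and $T-\lambda I$ is B-Fredholm; membership in $E_{a}^0(T)$ also yields $\alpha(T-\lambda I)<\infty$. Invoking the Berkani decomposition for B-Fredholm operators, I would write $X=X_1\oplus X_2$ and $T-\lambda I=F\oplus N$ with $F$ Fredholm on $X_1$ and $N$ nilpotent of some index $d$ on $X_2$. Then $\alpha(N)\le \alpha(T-\lambda I)<\infty$, and the standard nilpotent-chain observation --- that $N$ induces injections $\mathcal{N}(N^{k+1})/\mathcal{N}(N^k)\hookrightarrow \mathcal{N}(N^k)/\mathcal{N}(N^{k-1})$, so that each successive quotient has dimension at most $\alpha(N)$ --- gives $\dim X_2\le d\,\alpha(N)<\infty$. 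Hence $N$ is trivially Fredholm, $F\oplus N$ is Fredholm, and $\lambda\in\Delta_{e}(T)$.

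For (ii), I would repeat the same scheme with B-Weyl and $\Delta^g(T)$ in place of B-Fredholm and $\Delta_{e}^g(T)$: the hypothesis now gives that $T-\lambda I$ is B-Weyl, so in the same decomposition the Fredholm piece $F$ satisfies $\mathrm{ind}(F)=0$, and since $N$ on the finite-dimensional $X_2$ has index $0$ automatically, $T-\lambda I$ is Weyl and $\lambda\in\Delta(T)$.

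The only substantive step is the dimension bound $\dim X_2\le d\,\alpha(N)$ for the nilpotent summand; everything else is formal bookkeeping. I do not foresee a real obstacle here, since the argument rests on one standard fact about nilpotent operators together with the Berkani decomposition already well established in B-Fredholm theory, and part (ii) follows line for line from part (i) by upgrading ``Fredholm'' to ``Weyl''.
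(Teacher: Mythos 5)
Your proof is correct and follows essentially the same route as the paper: both reduce the claim to the single fact that a B-Fredholm (resp.\ B-Weyl) operator with finite-dimensional kernel is actually Fredholm (resp.\ Weyl), applied to $T-\lambda I$ for $\lambda\in E_{a}^0(T)\subset E_{a}(T)$. The paper merely asserts this key implication (``$T-\lambda I$ is a B-Fredholm operator with $\alpha(T-\lambda I)<\infty$, hence semi-Fredholm and consequently Fredholm''), whereas you justify it via Berkani's decomposition and the nilpotent dimension bound $\dim X_2\le d\,\alpha(N)$ --- a legitimate filling-in of the same step.
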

\begin{proof}
(i) Since $E_{a}(T)\subset\Delta_{e}^g(T),$ then     $E_{a}^0(T)\subset E_{a}^0(T)\cap \Delta_{e}^g(T).$  We have always $ E_{a}^0(T)\cap \Delta_{e}^g(T)\subset \Delta_{e}(T).$ Indeed, if $\lambda \in E_{a}^0(T)\cap \Delta_{e}^g(T),$ then $T-\lambda I$ is a B-Fredholm operator with $\alpha(T-\lambda I)<\infty.$ Hence,  $T-\lambda I$ is a semi-Fredholm and consequently  $T-\lambda I$ is a Fredholm operator. Thus $\lambda \in \Delta_{e}(T).$\\
 (ii) Goes similarly with (i).
\end{proof}

\begin{thm}\label{newthm0} Let $T \in L(X).$  The following statements hold.\\
(i) If   $E_{a}^0(T)\subset\Delta_{e}(T)\subset\sigma_{a}(T),$  then $E^0(T)=E_{a}^0(T)=\Pi^0(T)=\Pi_{a}^0(T).$\\
(ii) If   $E_{a}(T)\subset\Delta_{e}^g(T)\subset\sigma_{a}(T),$  then $E(T)=E_{a}(T)=\Pi(T)=\Pi_{a}(T)$ and $E^0(T)=E_{a} ^0(T)=\Pi^0(T)=\Pi_{a}^0(T).$
\end{thm}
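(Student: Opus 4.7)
The plan is to reduce both parts to Theorem \ref{lem0} by exploiting the trivial inclusions $\Pi_a^0(T)\subset E_a^0(T)$ and $\Pi_a(T)\subset E_a(T)$, and then to upgrade the left-pole conclusions to the corresponding $E_a$-equalities via a direct punctured-neighborhood argument.

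For part (i), I would begin by observing that $\Pi_a^0(T)\subset E_a^0(T)\subset\Delta_e(T)\subset\sigma_a(T)$, which places us precisely in the hypothesis of Theorem \ref{lem0}(i); this hands over $\Pi^0(T)=\Pi_a^0(T)$ at no cost. The substantive step is then $E_a^0(T)\subset\Pi_a^0(T)$: given $\lambda\in E_a^0(T)$, the hypothesis forces $T-\lambda I$ to be Fredholm, and $\lambda\in\mbox{iso}\,\sigma_a(T)$ supplies $\epsilon>0$ on which $T-\mu I$ is bounded below for $\mu\in B(\lambda,\epsilon)\setminus\{\lambda\}$. A standard semi-Fredholm punctured-neighborhood argument then yields $p(T-\lambda I)<\infty$, so $\lambda\in \Pi_a^0(T)$. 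To close the loop I would invoke the elementary inclusion $\mbox{iso}\,\sigma(T)\cap\sigma_a(T)\subset\mbox{iso}\,\sigma_a(T)$, which gives $E^0(T)\subset E_a^0(T)$, together with the trivial $\Pi^0(T)\subset E^0(T)$; the four sets then collapse to a single common set.

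For part (ii), I would first apply Lemma \ref{newlema1}(i) to the hypothesis to obtain $E_a^0(T)\subset\Delta_e(T)$; since $\Delta_e(T)\subset\Delta_e^g(T)\subset\sigma_a(T)$, part (i) immediately supplies the finite-rank equalities $E^0(T)=E_a^0(T)=\Pi^0(T)=\Pi_a^0(T)$. For the non-superscript chain, the inclusion $\Pi_a(T)\subset E_a(T)\subset\Delta_e^g(T)\subset\sigma_a(T)$ invokes Theorem \ref{lem0}(ii) to give $\Pi(T)=\Pi_a(T)$. The remaining step $E_a(T)\subset\Pi_a(T)$ I would handle exactly as in the proof of Theorem \ref{lem0}(iii): combining the punctured-neighborhood theorem for semi-B-Fredholm operators with $\lambda\in\mbox{iso}\,\sigma_a(T)$ forces $\lambda\not\in\sigma_{ld}(T)$. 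Finally, $E(T)\subset E_a(T)$ (again from $\mbox{iso}\,\sigma(T)\cap\sigma_a(T)\subset\mbox{iso}\,\sigma_a(T)$) and $\Pi(T)\subset E(T)$ collapse the whole chain.

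The main obstacle I expect is the step $E_a^0(T)\subset\Pi_a^0(T)$ in part (i). In the proof of Theorem \ref{lem0}(i) the author starts from the a priori stronger assumption $\lambda\in\Pi_a^0(T)$ and exploits topological uniform descent of $T-\lambda I$; here we know only that $T-\lambda I$ is Fredholm, so we must instead rely on the classical principle that a semi-Fredholm operator admitting a punctured neighborhood of bounded-below perturbations must have finite ascent. This is routine but is the one place where genuinely new analytic input beyond a bookkeeping reduction to Theorem \ref{lem0} is needed, and it deserves an explicit citation or a short justification rather than being absorbed into the reduction.
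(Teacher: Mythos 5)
Your proof is correct, and it matches what the paper intends: the authors explicitly leave the proof of Theorem \ref{newthm0} to the reader, indicating only that it should parallel Lemma \ref{newlem0} and Theorem \ref{lem0}, and your reduction via $\Pi_{a}^0(T)\subset E_{a}^0(T)$, $\Pi_{a}(T)\subset E_{a}(T)$, Lemma \ref{newlema1} and Theorem \ref{lem0} is exactly that. The one step you flag as needing extra input, $E_{a}^0(T)\subset\Pi_{a}^0(T)$, is precisely the inclusion $\mbox{iso}\,\sigma_{a}(T)\cap(\sigma_{e}(T))^C\subset\Pi_{a}^0(T)$ that the paper already invokes as ``always true'' in the proof of Proposition \ref{prop1}, so your punctured-neighborhood justification of it (and the analogous semi-B-Fredholm argument for $E_{a}(T)\subset\Pi_{a}(T)$, copied from Theorem \ref{lem0}(iii)) closes the argument completely.
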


\begin{rema}\label{rema3} From   Proposition \ref{prop1}, Proposition \ref{prop3} and Remark \ref{rema0} (or from Theorem \ref{newthm0}  and Proposition \ref{prop2}),  we deduce that   if $T\in  (aw_{e})$ then $\Delta(T)=\Delta_e(T)=\Pi^0(T)=\Pi_a^0(T)=E^0(T)=E_{a}^0(T)$ and $\Delta^g(T)=\Delta_e^g(T)=\Pi(T).$   If in addition,   $T\in (gaw_{e})$ then $\Delta(T)=\Delta_e(T)=\Pi^0(T)=\Pi_a^0(T)=E^0(T)=E_{a}^0(T)$ and $\Delta^g(T)=\Delta_e^g(T)=\Pi(T)=\Pi_a(T)=E(T)=E_{a}(T).$
\end{rema}


As  conclusion, we  give a summary of the results obtained in the two preceding parts of
this paper. In the following diagram,
   the arrows signify the relation of inclusion between
   the known  classes  $(B),$ $(gB),$ $(B_{e}),$ $(gB_{e}),$ $(ab),$  $(gab),$ $(W),$ $(gW),$  $(W_{e}),$ $(gW_{e}),$  $(aw),$  $(gaw)$ and various classes introduced and studied in this paper.
 The numbers near the arrows are references to
the results obtained in the present paper (numbers without brackets) or to
the bibliography therein (the numbers in square brackets).

\begin{tikzpicture}
    \node[state] (a) at (0,0) {$(B)$}; 


    \node[state] (b) at (-3,0) {$(ab_{e})$};
    \node[state] (c) at (-6,0) {$(gab_{e})$};
    \node[state] (d) at (0,-2) {$(gB)$};
    \node[state] (e) at (0,-4) {$(B_{e})$};
    \node[state] (f) at (0,-6)  {$(gB_{e})$};

    \node[state] (g) at (0,2) {$(ab)$};
    \node[state] (h) at (0,4) {$(gab)$};
    \node[state] (i) at (0,6) {$(gaw_{e})$};

    \node[state] (j) at (3,0) {$(W)$};
    \node[state] (k) at (6,0) {$(gW)$};

    \node[state] (n) at (1.5,3.0) {$(aw_{e})$};
    \node[state] (o) at (4,1.5) {$(aw)$};
    \node[state] (p) at (5,3.5) {$(gaw)$};

    \node[state] (q) at (4,-1.5) {$(W_{e})$};
    \node[state] (r) at (5,-3.5) {$(gW_{e})$};

     \path (c) edge node {\scriptsize\ref{cor2}} (b);
     \path (i) edge node[midway, right] {\scriptsize\ref{prop1}} (c);

     \path (h) edge node[midway, right] {\scriptsize\cite{berkani-zariouh0}} (g);
     \path (g) edge node[midway, right] {\scriptsize\cite{berkani-zariouh0}} (a);
     \path (a) edge node[midway, right] {\scriptsize\cite{amouch-zguitti}} (d);
     \path (d) edge (a);
     \path (e) edge node[midway, right] {\scriptsize\cite{aznay-zariouh}} (d);
     \path (e) edge node[midway, right] {\scriptsize\cite{aznay-zariouh}} (f);
     \path (f) edge (e);

     \path (c) edge node[midway, right] {\scriptsize\ref{cor1}} (h);

     \path (b) edge node[midway, right] {\scriptsize\ref{cor1}} (g);
     \path (i) edge  node[midway, right] {\scriptsize\ref{cor3}} (n);

     \path (n) edge node[midway, right] {\scriptsize\ref{prop2}} (o);
     \path (i) edge node[midway, right] {\scriptsize\ref{prop2}} (p) ;
     \path (p) edge node[midway, right] {\scriptsize\cite{berkani-zariouh0}} (o);
     \path (o) edge node[midway, right] {\scriptsize\cite{berkani-zariouh0}} (j);
     \path (p) edge node[midway, right] {\scriptsize\cite{berkani-zariouh0}} (k);
     \path (k) edge node[midway] {\scriptsize\cite{Berkani-koliha}} (j);
     \path (j) edge node[midway] {\scriptsize\cite{harte-lee}}  (a);

     \path (q) edge node[midway, right] {\scriptsize\cite{aznay-zariouh}} (j);
     \path (r) edge node[midway, right] {\scriptsize\cite{aznay-zariouh}} (q);
     \path (q) edge node[midway, right] {\scriptsize\cite{aznay-zariouh}} (e);
     \path (r) edge  node[midway, right] {\scriptsize\cite{aznay-zariouh}} (k);
     \path (o) edge  node[midway, right] {\scriptsize\cite{berkani-zariouh0}} (g);
     \path (p) edge node[midway, right] {\scriptsize\cite{berkani-zariouh0}} (h);
     \path (b) edge node[midway, right] {\scriptsize\ref{cor0}} (e);

     \path (n) edge[bend right=35] node[midway, right] {\scriptsize\ref{prop3}} (q);
     \path (i) edge[bend left=95] node[midway, right] {\scriptsize\ref{prop3}} (r);
     \path  (n) edge[bend right=40] node[midway, right] {\scriptsize\ref{prop1}} (b);

\end{tikzpicture}

\goodbreak
{\small \noindent Zakariae Aznay,\\  Laboratory (L.A.N.O), Department of Mathematics,\\Faculty of Science, Mohammed I University,\\  Oujda 60000 Morocco.\\
aznay.zakariae@ump.ac.ma\\

\noindent Abdelmalek Ouahab,\newline Laboratory (L.A.N.O), Department of
	Mathematics,\newline Faculty of Science, Mohammed I University,\\
	\noindent Oujda 60000 Morocco.\\
	\noindent ouahab05@yahoo.fr\\

 \noindent Hassan  Zariouh,\newline Department of
Mathematics (CRMEFO),\newline
 \noindent and laboratory (L.A.N.O), Faculty of Science,\newline
  Mohammed I University, Oujda 60000 Morocco.\\
 \noindent h.zariouh@yahoo.fr

\end{document}